\title[]{A rigidity theorem for holomorphic disks in Teichm\"uller space}
\author{Hideki Miyachi}
\address{Department of Mathematics,
Graduate School of Science,
Osaka University,
Machikaneyama 1-1, Toyonaka, Osaka 560-0043, Japan}
\dedicatory{This paper is dedicated to Professor Hiroshige Shiga on the occasion of his 60th birthday.}
\thanks{The author is partially supported by the Ministry of Education, Science, Sports and Culture, Grant-in-Aid for Scientific Research (C), 21540177.}
\subjclass[2010]{Primary 32G15; Secondary 32F10, 32T15, 32E35}
\keywords{Teichm\"uller space, Teichm\"uller distance, Kobayashi distance, Pseudoconvex domains}
\newtheorem{theorem}{Theorem}[section]
\newtheorem{corollary}{Corollary}[section]
\newcommand{\gromov}[3]{\langle #1\,|\,#2\rangle_{#3}}
\newcommand{\Teichb}[1]{\mathcal{T}(#1)}
\newcommand{\ext}{{\rm Ext}}
\newcommand{\proj}{{\rm proj}}
\newcommand{\cl}[1]{{\rm cl}_{GM}(#1)}
\newcommand{\partialGM}[1]{{\partial_{GM}#1}}
\begin{document}
\maketitle

\begin{abstract}
In this paper,
we discuss a rigidity property for holomorphic disks in Teichm\"uller space.
In fact,
we give an improvement of Tanigawa's rigidity theorem.
We will also treat the rigidity property of holomorphic disks for complex manifolds.
We observe the rigidity property is valid for bounded strictly 
pseudoconvex domains with $C^{2}$-boundaries,
but the rigidity property does not hold for product manifolds.
\end{abstract}

\section{Introduction}
\subsection{}
Let $S$ be a compact orientable surface with negative Euler characteristic
(possibly with boundary).
Let $\Teichb{S}$ be the Teichm\"uller space of $S$
and $d_{T}$ denotes
the Teichm\"uller distance on $\Teichb{S}$.
Fix $x_{0}\in \Teichb{S}$.
The \emph{Gromov product} with basepoint $x_{0}$ is defined by
$$
\gromov{x}{y}{x_{0}}=\frac{1}{2}(d_{T}(x_{0},x)+d_{T}(x_{0},y)-d_{T}(x,y))
$$
for $x,y\in \Teichb{S}$.
The main purpose of this paper is to show the following.

\begin{theorem}[Rigidity of holomorphic disks]
\label{thm:main}
Let $f_{1}$ and $f_{2}$ be holomorphic mappings
from the unit disk $\mathbb{D}$ to $\Teichb{S}$.
Suppose that there is a measurable set $E\subset \partial \mathbb{D}$
of positive linear measure with the following property:
For any $z_{0}\in E$,
there is a sequence $\{z_{n}\}_{n=1}^{\infty}\subset \mathbb{D}$
such that $z_{n}\to z_{0}$ nontangentially
and $\gromov{f_{1}(z_{n})}{f_{2}(z_{n})}{x_{0}}\to \infty$.
Then,
$f_{1}(z)=f_{2}(z)$ for all $z\in \mathbb{D}$.
\end{theorem}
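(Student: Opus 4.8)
The plan is to reduce the statement to a classical boundary-uniqueness theorem for bounded holomorphic functions on $\mathbb{D}$, the reduction being powered by a boundary rigidity property of the Gromov product near the Gardiner--Masur boundary of $\Teichb{S}$.

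\emph{Setup and reduction.} By Royden's theorem $d_{T}$ coincides with the Kobayashi distance of $\Teichb{S}$, and via the Bers embedding we regard $\Teichb{S}$ as a bounded domain $\Omega\subset\mathbb{C}^{N}$; thus $f_{1}$ and $f_{2}$ become bounded holomorphic maps $\mathbb{D}\to\Omega$. By Fatou's theorem the coordinate functions of $f_{1}-f_{2}$ have nontangential limits almost everywhere on $\partial\mathbb{D}$; let $E_{0}\subset E$ be the subset, still of positive linear measure, on which $f_{1}$ and $f_{2}$ both admit nontangential limits $f_{1}^{*}(z_{0}),f_{2}^{*}(z_{0})\in\overline{\Omega}$. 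If we can show $f_{1}^{*}(z_{0})=f_{2}^{*}(z_{0})$ for a.e.\ $z_{0}\in E_{0}$, then $f_{1}-f_{2}$ is a bounded holomorphic map whose nontangential boundary values vanish on a set of positive measure, hence $f_{1}\equiv f_{2}$ by the classical uniqueness theorem of F.\ and M.\ Riesz. So the whole problem is to identify the two nontangential limits on $E_{0}$.

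\emph{Escape to the boundary, and the model case.} Fix $z_{0}\in E_{0}$ and take $z_{n}\to z_{0}$ nontangentially with $\gromov{f_{1}(z_{n})}{f_{2}(z_{n})}{x_{0}}\to\infty$; then $f_{i}(z_{n})\to f_{i}^{*}(z_{0})$ in $\overline{\Omega}$. From the triangle inequality one has $\gromov{x}{y}{x_{0}}\le\min\{d_{T}(x_{0},x),d_{T}(x_{0},y)\}$, so the hypothesis forces $d_{T}(x_{0},f_{i}(z_{n}))\to\infty$: both sequences leave every compact subset of $\Teichb{S}$, and $f_{1}^{*}(z_{0}),f_{2}^{*}(z_{0})$ lie on $\partial\Omega$. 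When $\Teichb{S}=\mathbb{D}$ one has the exact identity
$$
\gromov{z}{w}{0}=\log\frac{(1+|z|)(1+|w|)}{|1-\bar{z}w|+|z-w|},
$$
so that $\gromov{f_{1}(z_{n})}{f_{2}(z_{n})}{0}\to\infty$ already forces $|f_{1}(z_{n})-f_{2}(z_{n})|\to0$, hence $f_{1}^{*}(z_{0})=f_{2}^{*}(z_{0})$. The task in the general case is to reproduce this conclusion without the explicit formula.

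\emph{General case and the main obstacle.} Passing to a subsequence we may assume $f_{i}(z_{n})\to p_{i}$ in the Gardiner--Masur compactification $\cl{\Teichb{S}}$. The crux is to show that $\gromov{f_{1}(z_{n})}{f_{2}(z_{n})}{x_{0}}\to\infty$ forces $p_{1}=p_{2}$, and then to pass from equality of these Gardiner--Masur limits back to equality of the Bers limits $f_{1}^{*}(z_{0})=f_{2}^{*}(z_{0})$, using that $f_{1}(z_{n})$ and $f_{2}(z_{n})$ are nontangentially asymptotic in $\Teichb{S}$. The first point rests on the asymptotic geometry of the Teichm\"uller metric: the behavior of the Gromov product near $\partialGM{\Teichb{S}}$ is governed by the extremal length functions $\ext_{x}(\cdot)$ and by horofunctions of $(\Teichb{S},d_{T})$, the Gardiner--Masur embedding records precisely the projectivized extremal-length data, and distinct points of $\partialGM{\Teichb{S}}$ have finite Gromov product with respect to $x_{0}$. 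This cannot be a formal argument: $\Teichb{S}$ is not Gromov hyperbolic, and, as the product-manifold examples of this paper show, the rigidity property genuinely fails for products, so one must use features special to the Teichm\"uller metric. Establishing this boundary rigidity of the Gromov product --- that an unbounded Gromov product between $f_{1}(z_{n})$ and $f_{2}(z_{n})$ drives both sequences to a single boundary point, in a compactification fine enough to detect nontangential limits of holomorphic disks --- is the main obstacle; once it is in place, the F.\ and M.\ Riesz theorem completes the proof.
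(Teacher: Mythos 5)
Your reduction to a boundary uniqueness theorem (Fatou plus Lusin--Privalov/Riesz) and your observation that the hypothesis forces $d_{T}(x_{0},f_{i}(z_{n}))\to\infty$ match the paper's opening moves, and your treatment of the one-dimensional case is essentially the paper's own sketch in \S\ref{subsec:teichmuller-dimension-one}. But the heart of the proof --- the step you label ``the main obstacle'' --- is exactly what the paper has to prove, and you have left it unproved. A proposal that identifies the crux and stops is not a proof, so there is a genuine gap.

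Moreover, the route you propose for that crux is not the one the paper takes, and it faces a real difficulty. You want to show that $\gromov{f_{1}(z_{n})}{f_{2}(z_{n})}{x_{0}}\to\infty$ forces the Gardiner--Masur limits to satisfy $p_{1}=p_{2}$, and then transfer this to the Bers limits. Neither half works as stated: by Theorem \ref{thm:extension}, $\gromov{p_{1}}{p_{2}}{x_{0}}=\infty$ means only $i_{x_{0}}(p_{1},p_{2})=0$, and distinct boundary points can have vanishing intersection number (e.g.\ distinct projective measured foliations $[F],[G]$ with $i(F,G)=0$), so $p_{1}=p_{2}$ does not follow; and even granting $p_{1}=p_{2}$, there is no known continuous comparison between the Gardiner--Masur and Bers compactifications that would let you conclude $f_{1}^{*}(z_{0})=f_{2}^{*}(z_{0})$. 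The paper avoids both problems by an entirely different mechanism: Shiga's theorem guarantees that almost every nontangential Bers limit is a quasifuchsian or totally degenerate group without APT, hence here totally degenerate with a filling ending lamination $\lambda_{i}$; short curves on $f_{i}(z_{n})$ (Bers' inequality, Maskit's comparison, continuity of Thurston's length function) produce $\mu_{i}\in\mathcal{MF}$ supported on $\lambda_{i}$; Theorem \ref{thm:null-set} and the computation $i_{x_{0}}(p_{i},[\mu_{i}])=0$ pin down $\mathcal{N}(p_{i})=\mathcal{N}([\nu_{i}])$ with ${\rm supp}(\nu_{i})=\lambda_{i}$; then $i_{x_{0}}(p_{1},p_{2})=0$ gives $i(\nu_{1},\nu_{2})=0$, whence $\lambda_{1}=\lambda_{2}$ because filling laminations intersect everything but themselves, and the Ending Lamination Theorem identifies the Bers limits. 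None of this Kleinian-group input appears in your proposal, and without it (or a genuine substitute) the argument does not close.
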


We say here that a sequence in $\mathbb{D}$
converges to $z_{0}\in \partial \mathbb{D}$ \emph{nontangentially}
if it tends to $z_{0}$ from the inside of any fixed Stolz region with the vertex at $z_{0}$
(cf. \cite{Tsuji}).

Since $|\gromov{x}{y}{x_{0}}-\gromov{x}{y}{x_{1}}|\le
d_{T}(x_{0},x_{1})$
for $x,y,x_{0},x_{1}\in \Teichb{S}$,
the assumption in the theorem is independent of the choice of the basepoint.
Furthermore,
since $\gromov{x}{y}{x_{0}}\le d_{T}(x_{0},x)$,
each holomorphic mapping $f_{i}$ ($i=1,2$)
in the theorem satisfies $d_{T}(x_{0},f_{i}(z_{n}))\to \infty$ as $n\to \infty$.

\subsection{}
A typical example of a pair of holomorphic mappings
satisfying the assumption in Theorem \ref{thm:main}
is a pair consisting of 
$f_{1},f_{2}\colon \mathbb{D}\to \Teichb{S}$
which admits a measurable subset $E$ of positive linear measure
such that
for any $z_{0}\in E$,
there is a sequence $\{z_{n}\}_{n}\subset \mathbb{D}$
such that
$z_{n}\to z_{0}$ nontangentially and
$d_{T}(x_{0},f_{i}(z_{n}))\to \infty$
as $n\to \infty$
($i=1,2$)
but
$d_{T}(f_{1}(z_{n}),f_{2}(z_{n}))$
remains bounded.
Thus,
Theorem \ref{thm:main} is
recognized as an improvement of Tanigawa's rigidity theorem
of holomorphic families of holomorphic disks in Teichm\"uller space
(cf. \cite[Theorem 1]{Tanigawa}).
The rigidity of holomorphic disks in Teichm\"uller space plays
an important role for studying
holomorphic families of Riemann surfaces
over Riemann surfaces
(cf. \cite{ImayoshiShiga},
\cite{Shiga2}
and \cite{Shiga1}).
We will prove
Theorem \ref{thm:main} in \S\ref{sec:proof-of-the-theorem}.
Applying the rigidity theorem,
we also obtain a uniqueness theorem of holomorphic disks
(cf. Corollary \ref{coro:uniqueness-theorem}).

\subsection{}
\label{subsec:teichmuller-dimension-one}
We first sketch the proof of Theorem \ref{thm:main}
in the case of $\dim_{\mathbb{C}}\Teichb{S}=1$.
Namely,
$S$ is assumed to be either a once holed torus or a fourth holed sphere:
We realize $\Teichb{S}$ in $\mathbb{C}$
via the Bers embedding.
Then
$\Teichb{S}$ is a bounded domain
which is conformally equivalent to the unit disk $\mathbb{D}$
and hence $(\Teichb{S},d_T)$ is isometric to the Poincar\'e hyperbolic disk of curvature $-4$.
Since
the closure of $\Teichb{S}$ 
is homeomorphic to a Jordan domain (cf. \cite{Minsky}),
the Gromov boundary of $(\Teichb{S},d_T)$ is canonically identified with
the Euclidean boundary of $\Teichb{S}$ in $\mathbb{C}$
(cf. \S\ref{subsec:Gromovhyperbolic} below).

By Fatou's theorem,
we may assume that each $f_i$ has non-tangential limit $f^*_i$ at any point of $E$ for $i=1,2$
(cf. \cite[Theorem IV.7]{Tsuji}).
Let $z_{0}\in E$ and take $\{z_{n}\}_{n=1}^{\infty}\subset \mathbb{D}$
as in the theorem.
The condition $\gromov{f_{1}(z_{n})}{f_{2}(z_{n})}{x_{0}}\to \infty$ implies that
$\{f_{1}(z_{n})\}_{n=1}^\infty$
and $\{f_{2}(z_{n})\}_{n=1}^\infty$
determine the same ideal boundary point in the Gromov boundary of $\Teichb{S}$
and hence $f^*_1(z_0)=f^*_2(z_0)$.
Therefore,
we conclude $f_1(z)=f_2(z)$ for all $z\in \mathbb{D}$ by 
Lusin-Priwaloff-Riesz's theorem
(cf. \cite[\S14, \S15]{Lusin-Priwaloff} and \cite[Theorem IV. 9]{Tsuji}).

The proof of the case $\dim_{\mathbb{C}}\Teichb{S}\ge 2$
is established by the similar argument.
Unfortunately,
the situation drastically changes from the above case.
Indeed,
when $\dim_{\mathbb{C}}\Teichb{S}\ge 2$,
Teichm\"uller space is not Gromov hyperbolic,
and
less information is known about the geometry of the Bers boundary
(to the author's knowledge).
To overcome these difficulties,
we will apply the extremal length geometry of Teichm\"uller space
and sophisticated technologies from the theory of Kleinian groups.
We recall these briefly in \S\ref{sec:notation}.

\subsection{}
The Teichm\"uller distance coincides with
the Kobayashi distance on 
Teichm\"uller space (cf. \cite{Royden}).
Since the Kobayashi distances are biholomorphic invariants of complex manifolds,
the rigidity of holomorphic disks stated in Theorem \ref{thm:main}
is thought of as a property of complex manifolds. 
We will observe that
the rigidity property
in our sense
is valid for complex manifolds which are biholomorphic to
bounded strictly pseudoconvex domains with $C^{2}$-boundaries.
Meanwhile,
Teichm\"uller space
is not biholomorphic to such domains unless the complex dimension is one.
The rigidity property does not hold for product manifolds.
As a corollary,
we conclude that Teichm\"uller space is not
realized as
the product of complex manifolds,
which was already proven by H. Tanigawa
(cf. \cite[Corollary 3]{Tanigawa}).

\subsection*{Acknowledgements}
The author thanks Professor Ken'ichi Ohshika
for the stimulating discussions.
He also thanks the referee for his/her useful comments.

\section{Notation}
\label{sec:notation}
\subsection{Teichm\"uller space}
A \emph{marked Riemann surface} is a pair $(X,f)$ of a Riemann surface $X$ of analytically finite type
and an orientation preserving homeomorphism $f\colon {\rm Int}(S)\to X$,
where ${\rm Int}(S)$ is the interior of $S$.
Two marked Riemann surfaces $(X_1,f_1)$ and $(X_2,f_2)$ are said to be \emph{Teichm\"uller equivalent} if there is a conformal mapping $h\colon X_1\to X_2$ such that $h\circ f_1$ is homotopic to $f_2$.
The Teichm\"uller space $\Teichb{S}$ of $S$ is the set of Teichm\"uller equivalence classes of
marked Riemann surfaces.
The \emph{Teichm\"uller distance} is a distance on $\Teichb{S}$ defined by
$$
d_T(x,y)=\frac{1}{2}\inf_h\log K(h)
$$
for $x=(X,f)$ and $y=(Y,g)$ in $\Teichb{S}$,
where $h$ runs all quasiconformal mappings $h\colon X\to Y$ which homotopic to $f_2\circ f_1^{-1}$
and $K(h)$ is the maximal dilatation of $h$.

\subsection{Thurston theory}
\label{subset:thurston-theory}
\subsubsection{Measured laminations}
Let $\mathcal{S}$ be the set of homotopy classes of
non-trivial and non-peripheral simple closed curves on $S$.
Let $\mathcal{WS}$ be the set of weighted simple closed curves $t\alpha$ on $S$,
where $t\ge 0$ and $\alpha\in \mathcal{S}$.
The closure $\mathcal{MF}$
of the image of the embedding
$$
\mathcal{WS}\ni t\alpha\mapsto [\mathcal{S}\ni \beta\mapsto t\cdot i(\alpha,\beta)]
\in \mathcal{R}:=[0,\infty)^\mathcal{S}
$$
is called the \emph{space of measured foliations} on $S$,
where $i(\alpha,\beta)$ is the geometric intersection number between $\alpha$
and $\beta$.
When we fix a complete hyperbolic structure on ${\rm Int}(S)$
of finite area,
measured foliations are canonically identified with \emph{measured geodesic laminations}.
A \emph{geodesic lamination} is a compact set in ${\rm Int}(S)$
which is foliated by disjoint complete geodesics.
A \emph{measured geodesic lamination} is a geodesic lamination
with transverse invariant measure
(e.g. \cite{Bonahon_StonyBrook} and \cite{PeH}).
The underlying geodesic lamination
is called the \emph{support}.

By definition,
$\mathcal{MF}$ contains $\mathcal{WS}$ as a dense subset.
We define $i(t\alpha,s\beta)=ts\,i(\alpha,\beta)$ for $t\alpha,s\beta\in \mathcal{WS}$.
It is known that the intersection number function on $\mathcal{WS}\times \mathcal{WS}$ extends continuously
to the product space $\mathcal{MF}\times \mathcal{MF}$.
The space $\mathcal{R}$ admits a natural action of positive numbers by multiplication.
The quotient space
of $\mathcal{R}-\{0\}$ under this action
is denoted by
$\mathcal{PR}$.
Let $\proj\colon \mathcal{R}-\{0\}\to \mathcal{PR}$
be the projection.
The image $\mathcal{PMF}$ of $\mathcal{MF}-\{0\}$ 
under the projection
is called
the \emph{space of projective measured foliations} on $S$.

\subsubsection{Kleinian groups}
\label{subsubsec:BersEmbedding-endinglamination}
A \emph{Kleinian group} is a discrete subgroup of ${\rm PSL}_{2}(\mathbb{C})$.
Any Kleinian group acts on the hyperbolic $3$-space discontinuously.
By a \emph{Kleinian surface group}
we mean a Kleinian group isomorphic to $\pi_{1}(S)$ via
a type-preserving representation
(i.e. a representation which sends all peripheral loops to parabolic transformations).
An \emph{accidental parabolic transformation} (APT) in  a Kleinian surface group
is a parabolic element which corresponds to a non-peripheral loop on $S$.

Bonahon's tameness theorem asserts that the quotient hyperbolic manifold
of a Kleinian surface group is homeomorphic to ${\rm Int}(S)\times \mathbb{R}$
(cf. \cite{Bonahon_bouts}).
When a Kleinian surface group does not contain APT,
the quotient manifold
has two \emph{ends} corresponding to ${\rm Int}(S)\times \{t>0\}$ and ${\rm Int}(S)\times \{t<0\}$.
An end is said to be \emph{geometrically infinite} or \emph{simply degenerate}
if any neighborhood of the end contains a closed geodesic
which is homotopic to a simple closed curve on ${\rm Int}(S)\times \{0\}$.
For a geometrically infinite end,
we associate a unique geodesic lamination,
which we call the \emph{ending lamination} of the geometrically infinite end.
The ending lamination is \emph{filling} in the sense that 
it intersects transversely the support of every measured lamination except for itself
(cf. \cite{Bonahon_bouts}, \cite[\S2.5]{BCM}  and \cite{Thurston}).

A \emph{quasifuchsian group} is, by definition,
a Kleinian surface group which is obtained by a quasiconformal deformation of a Fuchsian group.
A Kleinian surface group is said to be
\emph{b-group} if
it has a unique simply connected invariant component.
A b-group is called a \emph{totally degenerate group} if 
its region of discontinuity is connected
(cf. \cite[\S2]{Bers}).

\subsubsection{The Bers embedding and b-groups}
\label{subsubsec:BersEmbedding}
The Teichm\"uller space $\Teichb{S}$ of $S$
is embedded into a finite dimensional complex Banach space
via the Bers embedding (cf. \cite[\S1]{Bers}).
The image of the Bers embedding is a bounded domain.
By the Bers embedding,
each point in $\Teichb{S}$ is associated with a quasifuchsian group.
Every point in the boundary of $\Teichb{S}$, called the \emph{Bers boundary},
corresponds to a b-group.
If a totally degenerate group does not contain APT,
the quotient hyperbolic manifold has a unique geometrically infinite end.
The \emph{ending lamination theorem} asserts that
two totally degenerate groups without APT in the Bers boundary agree
if and only if they have the same ending lamination
(cf. \cite{BCM}).

\subsection{Extremal length geometry of Teichm\"uller space}
\subsubsection{Gromov product of the Teichm\"uller distance}
For $\alpha\in \mathcal{S}$ and $y=(Y,f)\in \Teichb{S}$
we denote by $\ext_y(\alpha)$ the \emph{extremal length} of
the family of rectifiable simple closed curves on $Y$ homotopic to $f(\alpha)$.
When we put $\ext_{y}(t\alpha)=t^{2}\ext_{y}(\alpha)$,
the extremal length
extends continuously to $\mathcal{MF}$
(cf. \cite[Proposition 3]{Ker}).
The \emph{Gardiner-Masur embedding} $\Phi_{GM}$ is defined by
$$
\Phi_{GM}\colon
\Teichb{S}\ni y\mapsto \proj([\mathcal{S}\ni \alpha\mapsto \ext_{y}(\alpha)^{1/2}])\in \mathcal{PR}
$$
The closure $\cl{\Teichb{S}}$ of the image is called the \emph{Gardiner-Masur closure}
and the complement $\partialGM{\Teichb{S}}= \cl{\Teichb{S}}-\Phi_{GM}(\Teichb{S})$
the \emph{Gardiner-Masur boundary}.
F. Gardiner and H. Masur observed that 
the closure $\cl{\Teichb{S}}$ is compact and
$\mathcal{PMF}\subset \partialGM{\Teichb{S}}$
(cf. \cite{GM})

In \cite{Miyachi1},
the author proved the following theorem.

\begin{theorem}[Extension theorem]
\label{thm:extension}
Fix $x_{0}\in \Teichb{S}$.
The Gromov product $\gromov{\,\cdot\,}{\,\cdot\,}{x_{0}}$
on $\Teichb{S}\times \Teichb{S}$ extends continuously 
to $\cl{\Teichb{S}}\times \cl{\Teichb{S}}$
with values in the interval $[0,\infty]$.
Furthermore,
for $[F],[G]\in \mathcal{PMF}\subset \partialGM{\Teichb{S}}$,
we have
\begin{equation}
\label{eq:intersection1}
\exp(-2\gromov{[F]}{[G]}{x_{0}})=
\frac{i(F,G)}{\ext_{x_{0}}(F)^{{1/2}}\ext_{x_{0}}(G)^{{1/2}}}.
\end{equation}
\end{theorem}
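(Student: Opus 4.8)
The plan is to reduce everything to the Gardiner--Masur embedding, Kerckhoff's extremal-length formula for $d_T$, and Minsky's product inequality. For $y\in\Teichb{S}$ set
$$
\mathcal{E}_{x_0}(y)\colon\mathcal{MF}\ni\alpha\longmapsto e^{-d_T(x_0,y)}\ext_y(\alpha)^{1/2},
$$
a representative of $\Phi_{GM}(y)$; by Kerckhoff's formula $e^{2d_T(x_0,y)}=\sup_{\alpha}\ext_y(\alpha)/\ext_{x_0}(\alpha)$ (cf.\ \cite{Ker}) this is the unique representative with $\sup_{\alpha\in\mathcal{S}}\mathcal{E}_{x_0}(y)(\alpha)/\ext_{x_0}(\alpha)^{1/2}=1$, so in particular $\mathcal{E}_{x_0}(y)(\alpha)\le\ext_{x_0}(\alpha)^{1/2}$ for all $\alpha$. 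I would first prove that $y\mapsto\mathcal{E}_{x_0}(y)$ extends to a jointly continuous $\mathcal{E}_{\bullet}$ on $\cl{\Teichb{S}}$: along the lines of the Gardiner--Masur compactness argument (cf.\ \cite{GM}), the uniform bound above makes $\{\mathcal{E}_{x_0}(y)\}_{y}$ precompact for locally uniform convergence on $\mathcal{MF}$, so every $p\in\cl{\Teichb{S}}$ has subsequential limits; the delicate step is to show that these coincide, giving a well-defined $\mathcal{E}_{p}:=\lim_{y\to p}\mathcal{E}_{x_0}(y)$. Taking the limit along a Teichm\"uller ray, and using the known behaviour of $\ext_{\bullet}(\alpha)$ along Teichm\"uller geodesics, identifies for $[F]\in\mathcal{PMF}$
$$
\mathcal{E}_{[F]}(\alpha)=\frac{i(F,\alpha)}{\ext_{x_0}(F)^{1/2}},
$$
the normalization being exactly Minsky's inequality $i(F,\alpha)\le\ext_{x_0}(F)^{1/2}\ext_{x_0}(\alpha)^{1/2}$, with equality realized by a Hubbard--Masur partner of $F$ on $x_0$.

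Second, I would reformulate $d_T$ bilinearly. From $e^{d_T(x,y)}=\sup_{\alpha}\ext_y(\alpha)^{1/2}/\ext_x(\alpha)^{1/2}$, dualizing the numerator via Minsky's inequality and the Hubbard--Masur theorem gives
$$
e^{d_T(x,y)}=\sup_{\alpha,\beta\in\mathcal{MF}\setminus\{0\}}\frac{i(\alpha,\beta)}{\ext_x(\alpha)^{1/2}\ext_y(\beta)^{1/2}}
$$
(supremum attained), whence
$$
\exp\bigl(-2\gromov{x}{y}{x_0}\bigr)=e^{d_T(x,y)-d_T(x_0,x)-d_T(x_0,y)}
=\sup_{\alpha,\beta}\frac{i(\alpha,\beta)}{\bigl(e^{d_T(x_0,x)}\ext_x(\alpha)^{1/2}\bigr)\bigl(e^{d_T(x_0,y)}\ext_y(\beta)^{1/2}\bigr)}.
$$
The heart of the proof is to show that this quantity depends on $(x,y)$ only through $\bigl(\mathcal{E}_{x_0}(x),\mathcal{E}_{x_0}(y)\bigr)$ and does so continuously for locally uniform convergence; equivalently, that the defect $d_T(x_0,x_n)+d_T(x_0,y_n)-d_T(x_n,y_n)$ converges whenever $x_n\to p$ and $y_n\to q$ in $\cl{\Teichb{S}}$, with a limit depending on $(p,q)$ alone. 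One then \emph{defines} $\gromov{p}{q}{x_0}$ to be one half of that limit; the value lies in $[0,\infty]$ because the defect is nonnegative (triangle inequality) and may diverge.

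For the displayed identity on $\mathcal{PMF}\times\mathcal{PMF}$ I would evaluate this extension along Teichm\"uller rays: pick $x_n\to[F]$ and $y_n\to[G]$ with $d_T(x_0,x_n)=t_n\to\infty$, $d_T(x_0,y_n)=s_n\to\infty$, so that $\ext_{x_n}(F)^{1/2}=e^{-t_n}\ext_{x_0}(F)^{1/2}$, $\ext_{y_n}(G)^{1/2}=e^{-s_n}\ext_{x_0}(G)^{1/2}$, while $e^{-t_n}\ext_{x_n}(\cdot)^{1/2}\to\mathcal{E}_{[F]}$ and $e^{-s_n}\ext_{y_n}(\cdot)^{1/2}\to\mathcal{E}_{[G]}$ locally uniformly. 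Inserting the fixed pair $(\alpha,\beta)=(F,G)$ into the bilinear formula for $e^{d_T(x_n,y_n)}$ and dividing by $e^{t_n+s_n}$ yields
$$
\exp\bigl(-2\gromov{x_n}{y_n}{x_0}\bigr)\ \ge\ \frac{i(F,G)}{\ext_{x_0}(F)^{1/2}\ext_{x_0}(G)^{1/2}},
$$
which is one half of the identity. The reverse inequality --- that no competing pair $(\alpha_n,\beta_n)$ does asymptotically better than $(F,G)$ --- is the hard half: it comes down to showing that a pair realizing Kerckhoff's supremum for $d_T(x_n,y_n)$ converges projectively to $(F,G)$, for which one uses the extremal-length description of $\cl{\Teichb{S}}$ and, when $\dim_{\mathbb{C}}\Teichb{S}\ge2$, Thurston/Kleinian-group technology (ending laminations, Minsky's product regions \cite{Minsky}, the ending lamination theorem \cite{BCM}) to control the limits of the extremal data.

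I expect the main obstacle to be the phenomenon flagged in \S\ref{subsec:teichmuller-dimension-one}: for $\dim_{\mathbb{C}}\Teichb{S}\ge2$ the space $(\Teichb{S},d_T)$ is not Gromov hyperbolic, so no thin-triangle inequality is available to force the defect to converge, and little is known about the Bers boundary; thus all the weight falls on extremal-length geometry --- the precompactness and well-definedness of $\mathcal{E}_{p}$, the upper semicontinuity of the bilinear functional under Gardiner--Masur convergence (which also handles continuity at pairs with $\gromov{p}{q}{x_0}=\infty$, e.g.\ when $i(F,G)=0$), and the control of asymptotic extremizers. Independence of the basepoint is automatic from the inequality $|\gromov{x}{y}{x_0}-\gromov{x}{y}{x_1}|\le d_T(x_0,x_1)$ noted in the introduction.
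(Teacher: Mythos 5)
First, a point of reference: the paper does not prove Theorem \ref{thm:extension} at all --- it is quoted from \cite{Miyachi1} (``In \cite{Miyachi1}, the author proved the following theorem''), so there is no in-paper argument to compare yours against; the comparison has to be with the strategy of that reference. Your outline does assemble the right ingredients: Kerckhoff's formula $e^{2d_T(x,y)}=\sup_\alpha \ext_y(\alpha)/\ext_x(\alpha)$, Minsky's inequality, the bilinear identity $e^{d_T(x,y)}=\sup_{\alpha,\beta} i(\alpha,\beta)/(\ext_x(\alpha)^{1/2}\ext_y(\beta)^{1/2})$ (which is correct, and whose verification via the Teichm\"uller differential you sketch accurately), the normalized Gardiner--Masur representatives $\mathcal{E}_{x_0}(y)=e^{-d_T(x_0,y)}\ext_y(\cdot)^{1/2}$, and evaluation along Teichm\"uller rays for the lower bound in \eqref{eq:intersection1}. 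These are indeed the tools of the extremal-length approach in \cite{Miyachi1}.

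As a proof, however, the proposal has a genuine gap, located exactly where you place ``the heart of the proof.'' Your displayed reformulation expresses $\exp(-2\gromov{x}{y}{x_{0}})$ as a supremum whose denominators are $e^{+d_T(x_0,x)}\ext_x(\alpha)^{1/2}$ and $e^{+d_T(x_0,y)}\ext_y(\beta)^{1/2}$, i.e.\ the \emph{reciprocally} normalized functions, not the representatives $\mathcal{E}_{x_0}(x)$, $\mathcal{E}_{x_0}(y)$; rewriting the supremum in terms of $\mathcal{E}_{x_0}$ reintroduces the divergent prefactor $e^{-2d_T(x_0,x)-2d_T(x_0,y)}$. So the assertion that the quantity ``depends on $(x,y)$ only through $(\mathcal{E}_{x_0}(x),\mathcal{E}_{x_0}(y))$'' does not follow from the displayed formula --- it is precisely the nontrivial statement to be proved, as is the convergence of the defect $d_T(x_0,x_n)+d_T(x_0,y_n)-d_T(x_n,y_n)$, the well-definedness of $\mathcal{E}_p$, and the upper bound in \eqref{eq:intersection1} (control of asymptotic extremizers). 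Each of these is announced (``the delicate step,'' ``the hard half'') rather than argued, so what you have is a correct road map with the destinations marked, not a proof. Two smaller points: the Kleinian-group technology you invoke for the upper bound (ending laminations, the ending lamination theorem \cite{BCM}) belongs to the proof of Theorem \ref{thm:main} in \S\ref{sec:proof-of-the-theorem}, not to Theorem \ref{thm:extension}, which is a statement of pure extremal-length geometry (and the citation \cite{Minsky} in this paper is to the punctured-torus classification, not to the product-regions theorem); and continuity at pairs $(p,q)$ with $\gromov{p}{q}{x_{0}}=\infty$ requires the value $\infty$ to be attained continuously, which an upper-semicontinuity remark alone does not settle.
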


\subsubsection{Intersection number with basepoint}
We define the \emph{intersection number with basepoint} $x_{0}\in \Teichb{S}$ by
$$
i_{x_{0}}(p_{1},p_{2})=\exp(-2\gromov{p_{1}}{p_{2}}{x_{0}})
$$
for $p_{1},p_{2}\in \cl{\Teichb{S}}$.
It is known that
\begin{equation}
\label{eq:intersection2}
i_{x_{0}}(y,[F])=
\frac{e^{-d_{T}(x_{0},y)}\ext_{y}(F)^{1/2}}{\ext_{x_{0}}(F)^{1/2}
}
\end{equation}
for $y\in \Teichb{S}$ and $[F]\in \mathcal{PMF}$
where we set $\exp(-\infty)=0$
(cf. \cite[\S5.1]{Miyachi1}).
For $p\in \cl{\Teichb{S}}$,
we define
$$
\mathcal{N}(p)=\{q\in \cl{\Teichb{S}}\mid i_{x_{0}}(p,q)=0\}.
$$
In \cite{Miyachi2},
the author showed the following.

\begin{theorem}[Null set]
\label{thm:null-set}
$\mathcal{N}(p)\ne \emptyset$ if and only if $p\in \partialGM{\Teichb{S}}$.
In addition,
for any $p\in \partialGM{\Teichb{S}}$,
there is $[F]\in \mathcal{PMF}$ such that
$\mathcal{N}(p)=\mathcal{N}([F])$.
\end{theorem}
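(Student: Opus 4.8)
We outline a proof. The strategy is to transfer the problem to the functional model of the Gardiner--Masur closure and then to reduce it to the geometry of intersection numbers on $\mathcal{MF}$.

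Recall (cf.\ \cite{GM}, \cite{Miyachi1}) that each $p\in\cl{\Teichb{S}}$ is represented by a continuous, positively homogeneous function $\mathcal{E}_p\colon\mathcal{MF}\to[0,\infty)$, normalized so that $\sup\{\mathcal{E}_p(F)^2/\ext_{x_0}(F):F\in\mathcal{MF}\}=1$ (the supremum is attained, since $\ext_{x_0}$ is continuous and positive on the compact space $\mathcal{PMF}$), with $\mathcal{E}_y(F)=e^{-d_T(x_0,y)}\ext_y(F)^{1/2}$ for $y\in\Teichb{S}$ (Kerckhoff's formula) and $\mathcal{E}_{[G]}(F)=i(G,F)\,\ext_{x_0}(G)^{-1/2}$ for $[G]\in\mathcal{PMF}$. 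Theorem~\ref{thm:extension} together with \eqref{eq:intersection2} then gives
\[
i_{x_0}(p,[F])=\frac{\mathcal{E}_p(F)}{\ext_{x_0}(F)^{1/2}}\qquad\bigl(p\in\cl{\Teichb{S}},\ [F]\in\mathcal{PMF}\bigr),
\]
and hence, by symmetry of $i_{x_0}$, also $i_{x_0}([F],q)=\mathcal{E}_q(F)\,\ext_{x_0}(F)^{-1/2}$ for every $q\in\cl{\Teichb{S}}$. In particular $q\in\mathcal{N}([F])$ if and only if $\mathcal{E}_q(F)=0$, and $\mathcal{N}(p)\cap\mathcal{PMF}$ is the projective zero locus $Z(p):=\{[F]\in\mathcal{PMF}:\mathcal{E}_p(F)=0\}$. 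The ``only if'' direction is now immediate: if $p=y\in\Teichb{S}$, then the triangle inequality gives $\gromov{y}{z}{x_0}\le d_T(x_0,y)$ for $z\in\Teichb{S}$, so $i_{x_0}(y,z)\ge e^{-2d_T(x_0,y)}$, and since $i_{x_0}(y,\cdot)$ is continuous on the compact set $\cl{\Teichb{S}}$ and $\Phi_{GM}(\Teichb{S})$ is dense, $i_{x_0}(y,q)\ge e^{-2d_T(x_0,y)}>0$ for all $q$; thus $\mathcal{N}(y)=\emptyset$. This also shows $\mathcal{N}(p)\subset\partialGM{\Teichb{S}}$ for every $p$, so it remains to treat $p\in\partialGM{\Teichb{S}}$.

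Fix such a $p$ and a sequence $y_n\to p$ in $\Teichb{S}$; then $d_T(x_0,y_n)\to\infty$, for otherwise a subsequence would converge in $\Teichb{S}$. Let $F_n\in\mathcal{MF}$ realize $\min_G\ext_{y_n}(G)/\ext_{x_0}(G)$; by Kerckhoff's formula and its symmetry this minimum equals $e^{-2d_T(x_0,y_n)}$, so with the normalization $\ext_{x_0}(F_n)=1$ we have $\mathcal{E}_{y_n}(F_n)=e^{-2d_T(x_0,y_n)}\to0$. The heart of the proof is to analyze the degeneration $y_n\to p$ --- by means of Minsky's product-region estimates for extremal length, Kerckhoff's formula, and (when $\dim_{\mathbb{C}}\Teichb{S}\ge2$, so that $\Teichb{S}$ is no longer Gromov hyperbolic) the structure of $b$-groups on the Bers boundary together with the ending lamination theorem recalled in \S\ref{sec:notation} --- in order to produce from a subsequential limit of the $F_n$ a measured lamination $F_p\in\mathcal{MF}$ with
\[
\{G\in\mathcal{MF}:\mathcal{E}_p(G)=0\}=\{G\in\mathcal{MF}:i(F_p,G)=0\};
\]
morally, $F_p$ lives on the subsurface of $x_0$ whose conformal structure is not collapsed in the limit and $\mathcal{E}_p(\cdot)^2$ is an extremal-length-type quantity governed by it. The delicate point, and the main difficulty, is that the convergence $\mathcal{E}_{y_n}\to\mathcal{E}_p$ is a priori only pointwise, so one must control the extremal lengths $\ext_{y_n}(G)$ for $G$ near $F_p$ uniformly along the degenerating sequence. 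Granting this, $i(F_p,F_p)=0$ gives $[F_p]\in Z(p)$, whence $\mathcal{N}(p)\supset Z(p)\ne\emptyset$; this proves the equivalence, and in addition $Z(p)=Z([F_p])$.

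It remains to promote $Z(p)=Z([F_p])$ to the equality $\mathcal{N}(p)=\mathcal{N}([F_p])$, that is, to compare the two null sets also at points $q\in\partialGM{\Teichb{S}}\setminus\mathcal{PMF}$. Applying the construction above to $q$ produces its controlling foliation $F_q$, and then $q\in\mathcal{N}([F_p])\iff\mathcal{E}_q(F_p)=0\iff i(F_q,F_p)=0$ (for interior $q$ both conditions fail, consistently with $\mathcal{N}(p)\subset\partialGM{\Teichb{S}}$). Thus the theorem reduces to the criterion
\[
i_{x_0}(p,q)=0\quad\Longleftrightarrow\quad i(F_p,F_q)=0\qquad\bigl(p,q\in\partialGM{\Teichb{S}}\bigr).
\]
The implication ``$i(F_p,F_q)>0\Rightarrow i_{x_0}(p,q)>0$'' is the easier half: choosing $G_n\to[F_p]$ in $\mathcal{MF}$ that realizes the normalization of $\mathcal{E}_p$ and along which $\mathcal{E}_q(G_n)$ stays bounded away from $0$ (possible because $i(F_q,F_p)>0$), and approximating $p$ and $q$ by interior points, one bounds $i_{x_0}(p,q)$ from below using \eqref{eq:intersection1} and \eqref{eq:intersection2}. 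The converse amounts to showing that disjoint realizability of the controlling foliations forces $e^{d_T(y_n,z_n)}=o\bigl(e^{d_T(x_0,y_n)+d_T(x_0,z_n)}\bigr)$ along arbitrary sequences $y_n\to p$ and $z_n\to q$, between which there is no a priori metric comparison; in complex dimension one this is Gromov hyperbolicity, while in general it rests again on Minsky's extremal-length estimates and on the ending lamination theory to decouple the two degenerations.
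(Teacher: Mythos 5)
The paper does not prove this theorem; it is quoted from \cite{Miyachi2}, so there is no internal proof to compare against. Judged on its own terms, your proposal sets up the right framework: the functional model $\mathcal{E}_p$, the identification $i_{x_0}(p,[F])=\mathcal{E}_p(F)\,\ext_{x_0}(F)^{-1/2}$, and the easy direction $\mathcal{N}(y)=\emptyset$ for $y\in\Teichb{S}$ (via $\gromov{y}{z}{x_0}\le d_T(x_0,y)$ plus the continuity in Theorem \ref{thm:extension}) are all sound. The nonemptiness of $\mathcal{N}(p)$ for boundary $p$ can also be closed along the lines you indicate: with $\ext_{x_0}(F_n)=1$ and Kerckhoff's formula one gets $i_{x_0}(y_n,[F_n])=e^{-2d_T(x_0,y_n)}\to 0$, and the \emph{joint} continuity of $i_{x_0}$ on $\cl{\Teichb{S}}\times\cl{\Teichb{S}}$ (which Theorem \ref{thm:extension} provides) lets you pass to a subsequential limit $[F_p]$ with $i_{x_0}(p,[F_p])=0$; your worry about pointwise versus uniform convergence is resolved by that joint continuity rather than by any extra estimate.

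The genuine gap is the second, and main, assertion: $\mathcal{N}(p)=\mathcal{N}([F])$ for some $[F]\in\mathcal{PMF}$. Everything you offer for this is deferred: the key structural claim $\{G:\mathcal{E}_p(G)=0\}=\{G:i(F_p,G)=0\}$ is introduced with ``the heart of the proof is to analyze the degeneration \dots in order to produce'' and then used under the explicit caveat ``Granting this''; likewise the final criterion $i_{x_0}(p,q)=0\iff i(F_p,F_q)=0$, whose hard implication you describe only as ``rest[ing] again on Minsky's extremal-length estimates and on the ending lamination theory.'' These two statements \emph{are} the theorem: for a general Gardiner--Masur boundary point $p$ the function $\mathcal{E}_p$ is not of the form $i(F_p,\cdot)$ (the boundary is strictly larger than $\mathcal{PMF}$), so the coincidence of zero sets is exactly what must be established, and no argument is given. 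I would also caution that routing this through $b$-groups and the ending lamination theorem is likely off target here: the Gardiner--Masur boundary is an extremal-length object with no canonical map to the Bers boundary, and the degenerating sequences $y_n\to p$ need not be nontangential approaches of holomorphic disks, so the Kleinian-group machinery invoked elsewhere in the paper does not obviously apply. As written, the proposal is a plausible plan with its central steps unproven.
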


\subsection{Gromov hyperbolic space}
\label{subsec:Gromovhyperbolic}
Let $(X,d_{X})$ be a metric space.
Let $x_{0}\in X$ be a basepoint.
The \emph{Gromov product} with reference point $x_{0}$ is defined by
\begin{equation}
\label{eq:Gromov-product-X}
\gromov{x}{y}{x_{0}}^{X}=\frac{1}{2}(d_{X}(x_{0},x)+d_{X}(x_{0},y)-d_{X}(x,y)).
\end{equation}
A \emph{Gromov hyperbolic space}
is a metric space $(X,d_{X})$ with the property that
there is $\delta>0$ such that
$$
\gromov{x}{y}{x_{0}}^{X}\ge \min\{\gromov{x}{z}{x_{0}}^{X},\gromov{y}{z}{x_{0}}^{X}\}-\delta
$$
for all $x,y,z\in X$
(\cite[\S1.1]{Gromov}).

Let $(X,d_{X})$ be a Gromov hyperbolic space.
A sequence $\{x_{n}\}_{n=1}^{\infty}\subset X$ is said to be \emph{convergent at infinity} if
$\gromov{x_{n}}{x_{m}}{x_{0}}\to \infty$ as $n,m\to \infty$.
Two convergent sequences $\{x_{n}\}_{n=1}^{\infty}$ and $\{y_{n}\}_{n=1}^{\infty}$ at infinity
are \emph{equivalent} if $\liminf_{n\to 0}\gromov{x_{n}}{y_{n}}{x_{0}}=\infty$.
The set of equivalence classes of convergent sequences at infinity is called the
\emph{Gromov boundary} and denoted by $\partial_{\infty}X$
(\cite[\S1.8]{Gromov}).
The Poincar\'e hyperbolic disk $(\mathbb{D},d_{\mathbb{D}})$
is a typical example of Gromov hyperbolic space.
The Gromov boundary $\partial_{\infty}\mathbb{D}$
of $(\mathbb{D},d_{\mathbb{D}})$ is canonically identified with
the Euclidean boundary $\partial \mathbb{D}$
(\cite[\S1.5]{Gromov}).
However,
when $\dim_{\mathbb{C}}\Teichb{S}\ge 2$,
$(\Teichb{S},d_{T})$ is not Gromov hyperbolic
(cf. \cite[Theorem 3.1]{MW2}).

\section{Proof of the theorem}
\label{sec:proof-of-the-theorem}
\subsection{Proof of the theorem}
We identify $\Teichb{S}$ with a bounded domain in a finite dimensional
complex Banach space
via the Bers embedding.
By Fatou's theorem,
there is a measurable set $E_{0}\subset \partial \mathbb{D}$ of full measure
such that
$f_{1}$ and $f_{2}$ has nontangential limits at every $z_{0}\in E_{0}$.
Furthermore,
from Shiga's theorem
\cite[Theorem 5]{Shiga2},
we may assume that the nontangential limit at any point in $E_{0}$
corresponds to either a quasifuchsian group or a totally degenerate group
without APT.

Let $E_{1}=E_{0}\cap E$ and $z_{0}\in E_{1}$.
By the assumption,
there is a sequence $\{z_{n}\}_{n=1}^{\infty}\subset \mathbb{D}$
such that $z_{n}\to z_{0}$ nontangentially
and $\gromov{f_{1}(z_{n})}{f_{2}(z_{n})}{x_{0}}\to \infty$
as $n\to \infty$.
Denote by $f_{i}^{*}(z_{0})$
the nontangential limit of $f_{i}$ at $z_{0}$.
Since $d_{T}(x_{0},f_{i}(z_{n}))\to \infty$,
$f_{i}^{*}(z_{0})$ corresponds to a totally degenerate group
for $i=1,2$.
Let $\lambda_{1}$ and $\lambda_{2}$ be the ending laminations of
geometrically infinite ends of
the hyperbolic manifolds associated with
$f_{1}^{*}(z_{0})$ and $f_{2}^{*}(z_{0})$.

Fix $i=1,2$.
Take $\alpha^{i}_{n}\in \mathcal{S}$ with
$\ext_{f_{i}(z_{n})}(\alpha^{i}_{n})\le M$
for some constant $M>0$ independent of $n$
(cf. \cite[Theorem 1]{Bers2}).
By taking a subsequence,
there is a bounded sequence $\{t^{i}_{n}\}_{n}$
such that $t^{i}_{n}\alpha^{i}_{n}\to \mu_{i}\in \mathcal{MF}-\{0\}$.
Since $f_{i}(z_{n})$ converges to a totally degenerate group without APT,
from \cite[Theorem 2]{Abikoff},
we can see that $\ext_{x_{0}}(\alpha^{i}_{n})\to \infty$ as $n\to\infty$.
Hence,
we have that $t^{i}_{n}\to 0$
since $(t^{i}_{n})^{2}\ext_{x_{0}}(\alpha^{i}_{n})\to \ext_{x_{0}}(\mu_{i})$.
By Bers' inequality \cite[Theorem 3]{Bers}
and Maskit's comparison theorem \cite{Maskit},
the hyperbolic length of $t^{i}_{n}\alpha^{i}_{n}$ in the quasifuchsian manifold
associated with $f_{i}(z_{n})$ tends to $0$.
From the continuity of the Thurston's length function,
any sublamination of the support of $\mu_{i}$ is non-realizable
in the hyperbolic manifold associated with $f_{i}^{*}(z_{0})$
(cf. \cite{Ohshika} and \cite[Theorem 7.1, Corollary 7.3]{Brock}).
Hence,
the support of $\mu_{i}$
is contained in $\lambda_{i}$
(cf. \cite{Bonahon_bouts} and \cite[\S9]{Thurston}).
Since $\lambda_{i}$ is filling on $S$,
the support of $\mu_{i}$ coincides with $\lambda_{i}$
(cf. \S\ref{subsubsec:BersEmbedding-endinglamination}).

By taking a subsequence if necessary,
we may assume that $\{\Phi_{GM}(f_{i}(z_{n}))\}_{n=1}^{\infty}$
converges to a point $p_{i}\in\partialGM{\Teichb{S}}$.
By Theorem \ref{thm:null-set},
there is $\nu_{i}\in \mathcal{MF}$
such that $\mathcal{N}(p_{i})=\mathcal{N}([\nu_{i}])$.
By Theorem \ref{thm:extension} and \eqref{eq:intersection2},
we have
\begin{align*}
i_{x_{0}}(p_{i},[\mu_{i}])
&=
\lim_{n\to \infty}
i_{x_{0}}(f_{i}(z_{n}),t^{i}_{n}\alpha^{i}_{n}) \\
&=
\lim_{n\to \infty}e^{-d_{T}(x_{0},f_{i}(z_{n}))}
\frac{\ext_{f_{i}(z_{n})}(t^{i}_{n}\alpha^{i}_{n})^{1/2}}
{\ext_{x_{0}}(t^{i}_{n}\alpha^{i}_{n})^{1/2}} \\
&\le \lim_{n\to \infty}\frac{M^{1/2}t^{i}_{n}e^{-d_{T}(x_{0},f_{i}(z_{n}))}}{{\ext_{x_{0}}(t^{i}_{n}\alpha^{i}_{n})^{1/2}}}
=0.
\end{align*}
Hence
we obtain
$i(\nu_{i},\mu_{i})=0$ from \eqref{eq:intersection1}.
Therefore,
the support of $\nu_{i}$ coincides with that of $\mu_{i}$
since the ending lamination $\lambda_{i}$ is filling
(cf. \S\ref{subsubsec:BersEmbedding-endinglamination}).

Our assumption $\gromov{f_{1}(z_{n})}{f_{2}(z_{n})}{x_{0}}\to \infty$
implies that $i_{x_{0}}(p_{1},p_{2})=0$
and hence 
 $i(\nu_{1},\nu_{2})=0$
from \eqref{eq:intersection1} again.
Thus we obtain that $\lambda_{1}=\lambda_{2}$
and $f_{1}^{*}(z_{0})=f_{2}^{*}(z_{0})$
 from the ending lamination theorem
 (cf. \S\ref{subsubsec:BersEmbedding}).
Since $E_{1}$ has positive linear measure,
the coincidence between $f_{1}$ and $f_{2}$ on $\mathbb{D}$
follows from Lusin-Priwaloff-Riesz's theorem.

\subsection{Uniqueness of holomorphic disks}
From Theorem \ref{thm:main},
we conclude the following uniqueness theorem.

\begin{corollary}[Uniqueness theorem]
\label{coro:uniqueness-theorem}
Let $f_{1},f_{2}\colon \mathbb{D}\to \Teichb{S}$ be holomorphic mappings.
The following are equivalent.
\begin{enumerate}
\item
$f_{1}(z)=f_{2}(z)$ for all $z\in \mathbb{D}$.
\item
There is a measurable subset $E\subset \partial \mathbb{D}$ of positive linear measure
such that for any $z_{0}\in E$ there is a sequence $\{z_{n}\}_{n=1}^{\infty}\subset \mathbb{D}$ converging
nontangentially to $z_{0}$ which satisfies one of the following:
\begin{enumerate}
\item
$\gromov{f_{1}(z_{n})}{f_{2}(z_{n})}{x_{0}}=O(1)$ and $d_{T}(f_{1}(z_{n}),f_{2}(z_{n}))\to 0$
as $n\to \infty$.
\item
$\gromov{f_{1}(z_{n})}{f_{2}(z_{n})}{x_{0}}\to \infty$ as $n\to \infty$.
\end{enumerate}
\end{enumerate}
\end{corollary}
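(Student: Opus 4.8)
The plan is to reduce everything to Theorem~\ref{thm:main} and to classical boundary-value theory for holomorphic functions on $\mathbb{D}$. The implication $(1)\Rightarrow(2)$ costs almost nothing: assuming $f_{1}=f_{2}$ on $\mathbb{D}$, put $E=\partial\mathbb{D}$ and, for each $z_{0}\in\partial\mathbb{D}$, take the radial sequence $z_{n}=(1-1/n)z_{0}$, which converges nontangentially to $z_{0}$. Since $f_{1}=f_{2}$, one has $d_{T}(f_{1}(z_{n}),f_{2}(z_{n}))=0$ and $\gromov{f_{1}(z_{n})}{f_{2}(z_{n})}{x_{0}}=d_{T}(x_{0},f_{1}(z_{n}))$. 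Passing to a subsequence along which $d_{T}(x_{0},f_{1}(z_{n}))$ tends to some $L\in[0,\infty]$, we land in alternative (a) when $L<\infty$ and in alternative (b) when $L=\infty$; thus (2) holds.

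For $(2)\Rightarrow(1)$ the goal is to show that $f_{1}$ and $f_{2}$ have the same nontangential limit on a subset of $\partial\mathbb{D}$ of positive linear measure, and then to conclude by Lusin-Priwaloff-Riesz's theorem applied to the difference of the two maps read off in the Bers embedding. As in the proof of Theorem~\ref{thm:main}, Fatou's theorem and Shiga's theorem \cite[Theorem~5]{Shiga2} furnish a full-measure set $E_{0}\subset\partial\mathbb{D}$ on which each $f_{i}$ has a nontangential limit $f_{i}^{*}$ corresponding to a quasifuchsian group or to a totally degenerate group without APT. I would fix $z_{0}\in E_{0}\cap E$ and the sequence $\{z_{n}\}$ provided by (2), and split according to the two alternatives. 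If $\{z_{n}\}$ satisfies (b), then $\gromov{f_{1}(z_{n})}{f_{2}(z_{n})}{x_{0}}\to\infty$, which is exactly the hypothesis of Theorem~\ref{thm:main} at the point $z_{0}$; the pointwise part of that proof already yields $f_{1}^{*}(z_{0})=f_{2}^{*}(z_{0})$, and there is nothing further to do.

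The substantive case is (a), and the point I would exploit is that this alternative is much more rigid than it appears. Writing $2\gromov{x}{y}{x_{0}}=d_{T}(x_{0},x)+d_{T}(x_{0},y)-d_{T}(x,y)$ and using the triangle inequality, one obtains $\gromov{f_{1}(z_{n})}{f_{2}(z_{n})}{x_{0}}\ge d_{T}(x_{0},f_{1}(z_{n}))-d_{T}(f_{1}(z_{n}),f_{2}(z_{n}))$; since the left-hand side is bounded and $d_{T}(f_{1}(z_{n}),f_{2}(z_{n}))\to 0$, this forces $d_{T}(x_{0},f_{i}(z_{n}))$ to remain bounded for $i=1,2$. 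As closed $d_{T}$-balls of $\Teichb{S}$ are compact and $d_{T}$ induces the topology of the Bers embedding, I would pass to a further subsequence with $f_{i}(z_{n})\to w_{i}\in\Teichb{S}$; on the other hand $z_{n}\to z_{0}$ nontangentially gives $f_{i}(z_{n})\to f_{i}^{*}(z_{0})$, so $w_{i}=f_{i}^{*}(z_{0})\in\Teichb{S}$, and hence $d_{T}(f_{1}^{*}(z_{0}),f_{2}^{*}(z_{0}))=\lim_{n}d_{T}(f_{1}(z_{n}),f_{2}(z_{n}))=0$, i.e. $f_{1}^{*}(z_{0})=f_{2}^{*}(z_{0})$.

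Combining the two cases gives $f_{1}^{*}=f_{2}^{*}$ on $E_{0}\cap E$, a set of positive linear measure, and Lusin-Priwaloff-Riesz's theorem then yields $f_{1}=f_{2}$ on $\mathbb{D}$. I do not expect a real obstacle here: alternative (b) is handed over by Theorem~\ref{thm:main}, and alternative (a) collapses to a compactness argument once the displayed inequality is observed. The only delicate point is the bookkeeping between the Bers-closure topology, in which the nontangential limits live, and the Teichm\"uller metric, in which the compactness is available; this is precisely why, in case (a), one must argue that the common nontangential limit actually lies in the interior $\Teichb{S}$.
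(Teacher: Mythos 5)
Your proposal is correct and follows essentially the same route as the paper: case (b) is handed to the argument of Theorem \ref{thm:main}, case (a) is reduced to boundedness of $d_{T}(x_{0},f_{i}(z_{n}))$ via an elementary Gromov-product inequality (the paper uses the equivalent bound $d_{T}(x_{0},f_{i}(z_{n}))\le 2\gromov{f_{1}(z_{n})}{f_{2}(z_{n})}{x_{0}}+d_{T}(f_{1}(z_{n}),f_{2}(z_{n}))$, concluding that the nontangential limits are quasifuchsian, i.e.\ interior points, where your compactness-of-balls phrasing says the same thing), and the conclusion is drawn from Lusin--Priwaloff--Riesz. Your explicit verification of $(1)\Rightarrow(2)$ is a harmless addition that the paper omits as immediate.
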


\begin{proof}
We only check that  (2) implies (1).
Suppose the assertion (2).
We realize $\Teichb{S}$ as a bounded domain via the Bers embedding.
From Shiga's theorem,
we may assume that each $f_{i}$ has the non-tangential limit $f^{*}_{i}$
at any point in $E$ and the limit corresponds to
either a quasifuchsian group or a totally degenerate group without APT.

Let $z_{0}\in E$ and take a sequence $\{z_{n}\}_{n=1}^{\infty}\subset \mathbb{D}$
as in the assertion (2).
Suppose (a) holds.
Since
$$
d_{T}(x_{0},f_{i}(z_{n}))
\le
2\gromov{f_{1}(z_{n})}{f_{2}(z_{n})}{x_{0}}+d_{T}(f_{1}(z_{n}),f_{2}(z_{n}))
=O(1)
$$
as $n\to \infty$ for $i=1,2$,
the limits $f^{*}_{1}(z_{0})$ and $f^{*}_{2}(z_{0})$ are quasifuchsian groups.
Since $d_{T}(f_{1}(z_{n}),f_{2}(z_{n}))\to 0$,
we have
$f^{*}_{1}(z_{0})=f^{*}_{2}(z_{0})$.
If (b) holds,
we also deduce the equality $f^{*}_{1}(z_{0})=f^{*}_{2}(z_{0})$
by the same argument as that in Theorem \ref{thm:main}.
\end{proof}

\section{Rigidity of holomorphic disks in complex manifolds}
\label{sec:concluding-remark}
We shall discuss what kind of
complex manifolds
the rigidity theorem in our sense is valid.
Henceforth,
let $\Omega$ be a complex manifold.
Denote by $d_{\Omega}$ the Kobayashi distance on $\Omega$.
Fix a point $x_{0}\in \Omega$
and set
$\gromov{x}{y}{x_0}^\Omega$
to be the Gromov product on $(\Omega,d_{\Omega})$
with reference point $x_{0}$
(cf. \eqref{eq:Gromov-product-X}).

\subsection{}
The rigidity theorem in our sense
holds when
$\Omega$ is biholomorphic to
a bounded strongly pseudoconvex domain
with $C^{2}$-boundary:
The proof is established by the same argument as that for the case of Teichm\"uller space
of dimension one
(cf. \S\ref{subsec:teichmuller-dimension-one}).
However,
we shall give a proof for the completeness.

Notice that I. Graham showed that
$(\Omega,d_{\Omega})$ is a complete metric space
(cf \cite[Proposition 5]{Graham}).
In addition,
Z. Balogh and M. Bonk
observed that $(\Omega,d_{\Omega})$ is Gromov hyperbolic 
and the Gromov boundary $\partial_{\infty}\Omega$ of
$\Omega$ canonically coincides with the Euclidean boundary
$\partial\Omega$
(cf. \cite[Theorem 1.4]{BB}).

Let $f$ and $g$ be holomorphic mappings from $\mathbb{D}$ to $\Omega$.
Suppose that there is a measurable set $E\subset \partial \mathbb{D}$
of positive linear measure
such that 
for any $z_{0}\in E$,
there is a sequence $\{z_{n}\}_{n=1}^{\infty}\subset \mathbb{D}$
such that $z_{n}\to z_{0}$ nontangentially
and $\gromov{f(z_{n})}{g(z_{n})}{x_{0}}^{\Omega}\to \infty$.
Since $\Omega$ is a bounded domain,
we may assume that each of $f$ and $g$ admits
the nontangential limit at every point in $E$.
The condition $\gromov{f(z_{n})}{g(z_{n})}{x_{0}}^\Omega\to \infty$
implies that sequences
$\{f(z_{n})\}_{n=1}^\infty$
and $\{g(z_{n})\}_{n=1}^\infty$
converge the same ideal boundary point in $\partial_{\infty}\Omega=\partial\Omega$
(cf. \S\ref{subsec:Gromovhyperbolic}).
Hence $f$ and $g$ have the same nontangential limits on $E$.
Since $E$ has positive linear measure,
$f$ coincides with $g$ on $\mathbb{D}$ by Lusin-Priwaloff-Riesz's theorem
as in the previous section.

\subsection{}
On the other hand,
when $\dim_\mathbb{C}\Teichb{S}\ge 2$,
$(\Teichb{S},d_{T})$ is not Gromov hyperbolic.
Hence
the argument in the previous section does not work for Teichm\"uller spaces
unless $\dim_\mathbb{C}\Teichb{S}=1$.
As a consequence,
the class of complex manifolds
with the rigidity property in our sense is strictly
larger than the class of
bounded Gromov-hyperbolic pseudoconvex domains (in terms of the Kobayashi distances)
whose Gromov boundaries coincide with the Euclidean boundaries.
By applying the discussion in the previous section,
we can easily see that any pseudoconvex domain in the latter class 
satisfies the rigidity property in our sense.
One can also check that the uniqueness theorem in our sense also holds
for domains in the latter class.

\subsection{}
The rigidity theorem in our sense does not hold
if $\Omega$ is biholomorphic to
the product manifold $M_{1}\times M_{2}$
of some complex manifolds $M_{i}$ ($i=1,2$)
which admits a holomorphic mapping $f\colon \mathbb{D}\to \Omega$
with the property that there is a measurable set $E\subset \partial \mathbb{D}$
of positive linear measure such that
for any $z_{0}\in E$
there is a sequence $\{z_{n}\}_{n=1}^{\infty}\subset \mathbb{D}$
such that $z_{n}\to z_{0}$ nontangentially
and $d_{\Omega}(x_{0},f(z_{n}))\to \infty$.
For instance,
when $M_{2}=\mathbb{D}$,
a product manifold $M_{1}\times M_{2}$ has this property.
However,
when each $M_{i}$ is a closed complex manifold,
the product manifold $M_{1}\times M_{2}$ does not have the property.

It is known that
\begin{align}
\max\{d_{M_{1}}(z^{1},z^{2}),
d_{M_{2}}(w^{1},w^{2})\}
&\le
d_{\Omega}((z^{1},z^{2}),(w^{1},w^{2})) 
\nonumber\\
&\le
d_{M_{1}}(z^{1},z^{2}),
+d_{M_{2}}(w^{1},w^{2})
\label{eq:product1}
\end{align}
for $(z^{1},z^{2}),(w^{1},w^{2})\in \Omega=M_{1}\times M_{2}$
(cf. \cite[Proposition 2.5]{Kobayashi}).

Let $f=(f_{1},f_{2})$ and $x_{0}=(x^{1}_{0},x^{2}_{0})$.
From \eqref{eq:product1},
by taking a measurable subset in $E$ of positive linear measure
if necessary,
we may assume that for any $z_{0}\in E$
there is a sequence $\{z_{n}\}_{n=1}^{\infty}\subset \mathbb{D}$
such that $z_{n}\to z_{0}$ nontangentially
and $d_{M_{1}}(x^{1}_{0},f_{1}(z_{n}))$ tends to $\infty$.
Let $y^{2}_{0}\in M_{2}$ with $y^{2}_{0}\ne x^{2}_{0}$.
Define
\begin{align*}
g_{1}(z)&=(f_{1}(z),x^{2}_{0}) \\
g_{2}(z)&=(f_{1}(z),y^{2}_{0}).
\end{align*}
Then, $g_{1}(z)\ne g_{2}(z)$
but
$
d_{\Omega}(g_{1}(z),g_{2}(z))=d_{M_{2}}(x^{2}_{0},y^{2}_{0})
$
for all $z\in \mathbb{D}$.
For any $z_{0}\in E$,
there is a sequence $\{z_{n}\}_{n}\subset \mathbb{D}$
such that $z_{n}\to z_{0}$ nontangentially and
\begin{align*}
\gromov{g_{1}(z_{n})}{g_{2}(z_{n})}{x_{0}}^\Omega
&=
\frac{1}{2}(d_{\Omega}(x_{0},g_{1}(z_{n}))+d_{\Omega}(x_{0},g_{2}(z_{n}))-
d_{\Omega}(g_{1}(z_{n}),g_{2}(z_{n}))) \\
&\ge 
d_{M_{1}}(x^{1}_{0},f_{1}(z_{n}))-d_{M_{2}}(x^{2}_{0},y^{2}_{0})\to\infty
\end{align*}
as $n\to \infty$.
%



\begin{thebibliography}{99}
\bibitem{Abikoff}
W. Abikoff,
Two Theorems on Totally Degenerate Kleinian groups,
Amer. J. Math. {\bf 98},
109--118
(1976).

\bibitem{BB}
Z. Balogh and M. Bonk,
Gromov hyperbolicity and the Kobayashi metric on strictly
pseudoconvex domains,
Comment. Math. Helv. {\bf 75},
504--533
(2000).

\bibitem{Bers2}
L. Bers,
An inequality for Riemann surfaces.
Differential geometry and complex analysis,
Springer, Berlin,
87--93
(1985).

\bibitem{Bers}
L. Bers,
On Boundaries of Teichm\"uller sapces and 
on Kleinian groups I,
Ann. of Math. {\bf 91},
570--600
 (1970).

\bibitem{Bonahon_bouts}
F. Bonahon,
Bouts des vari\'et\'es hyperboliques de dimension $3$,
Ann. of Math. {\bf 124},  no. 1, 71--158 (1986).

\bibitem{Bonahon_StonyBrook}
F. Bonahon,
Geodesic laminations on surfaces,
\emph{Laminations and foliations in dynamics, geometry and topology}
(Stony Brook, NY, 1998), 1–37, Contemp. Math., {\bf 269},
Amer. Math. Soc., Providence, RI, (2001).

\bibitem{Brock}
J. Brock,
Continuity of Thurston's length function,
GAFA {\bf 10},
741--797
 (2000).

\bibitem{BCM}
J. Brock,
D. Canary,
and Y. Minsky,
The classification of Kleinian surface groups, II:
The ending lamination conjecture,
Ann. of Math. {\bf 176}, 1--149
 (2012).

\bibitem{GM}
F. Gardiner and H. Masur,
Extremal length geometry of Teichm\"uller space.
Complex Variables Theory Appl. {\bf 16}, no. 2-3, 209--237 (1991). 

\bibitem{Graham}
I. Graham,
Boundary behavior of the Carath\'eodory and
Kobayashi metrics on strongly pseudoconvex domains in $\mathbb{C}^{n}$
with smooth boundary,
Trans. Amer. Math. Soc. {\bf 207},
219--240
 (1975).

\bibitem{Gromov}
M. Gromov,
Hyperbolic groups,
\emph{Essays in Group Theory} (S. Gersten ed), 
MSRI Publications {\bf 8} (1987), 75--265
Springer.

\bibitem{ImayoshiShiga}
Y. Imayoshi  and H. Shiga,
A finiteness theorem for holomorphic families of Riemann surfaces.
Holomorphic functions and moduli, Vol. II (Berkeley, CA, 1986), 207–219, 
Math. Sci. Res. Inst. Publ., {\bf 11}, Springer, New York, (1988). 

\bibitem{Ker}
S. Kerckhoff,
The asymptotic geometry of Teichm\"uller space,
Topology {\bf 19}, 23--41 (1980).

\bibitem{Kobayashi}
S. Kobayashi,
Invariant distances on complex manifolds and holomorphic mappings,
J. Math. Soc. Japan {\bf 19},
460--480 (1967).

\bibitem{Lusin-Priwaloff}
N. Lusin and J. Priwaloff,
Sur l'unicit\'e et la multiplicit\'e des fonctions analytiques,
Annales scientifiques de l'\'E.N.S. $3^{e}$, {\bf 42},
143--191
 (1925).

\bibitem{Maskit}
B. Maskit,
Comparison of hyperbolic and extremal lengths,
Ann. Acad. Sci. Fenn. {\bf 10},
381--386
(1985).


\bibitem{MW2}
H. Masur and M. Wolf,
Teichm\"uller space is not Gromov hyperbolic,
Ann. Acad. Sci. Fenn. Math. {\bf 20}, 259--267 (1995).

\bibitem{Minsky}
Y. Minsky,
The classification of punctured-torus groups,
Ann. of Math. {\bf 149},
559--626 (1999).

\bibitem{Miyachi1}
H. Miyachi,
Unification of the extremal length geometry on Teichm\"uller space via intersection number,
to appear in Math. Z.



\bibitem{Miyachi2}
H. Miyachi,
Mappings which are conservative with the Gromov product at infinity,
preprint,
ArXiv.org \url{http://arxiv.org/abs/1306.1424}

\bibitem{Ohshika}
K. Ohshika,
Limits of geometrically tame Kleinian groups.
Invent. Math. {\bf 99},
185--203
 (1990).

\bibitem{PeH}
R. Penner and J. Harer,
\emph{Combinatrics of train tracks},
Ann. Math. Studies {\bf 125},
Princeton University Press (1992).


\bibitem{Royden}
H. Royden.
Automorphisms and isometries of Teichm\"uller space.
\emph{Advances in the Theory of Riemann Surfaces},
Princeton University Press,
369--384
(1971).

\bibitem{Shiga2}
H. Shiga,
On analytic and geometric properties of Teichm\"uller space,
J. Math Kyoto Univ. {\bf 24},
441--452
(1984).

\bibitem{Shiga1}
H. Shiga,
Remarks on holomorphic families of Riemann surfaces,
T\^ohoku Math. J. {\bf 38},
539--549
(1986).

\bibitem{Tanigawa}
H. Tanigawa,
Holomorphic mappings into Teichm\"uller spaces,
Proc. Amer. Math. Soc. {\bf 117},
71--78
(1993).

\bibitem{Thurston}
W. Thurston,
The geometry and Topology of Three-Manifolds,
\url{http://www.msri.org/publications/books/gt3m/}.

\bibitem{Tsuji}
M. Tsuji,
\emph{Potential theory in modern function theory},
Maruzen Co., Ltd., Tokyo (1959).


\end{thebibliography}
\end{document}